\documentclass[12pt]{article}
\usepackage{amsmath,amsthm,amssymb}
\usepackage[left=3cm,right=3cm, top=2.5cm,bottom=2.5cm,bindingoffset=0cm]{geometry}

\usepackage{amscd}

\usepackage{hyperref}

\usepackage{mathtools}

\usepackage[all,cmtip]{xy}


\usepackage{MnSymbol} 

\usepackage[table]{xcolor}

\usepackage{wrapfig}
\usepackage{graphicx}

\makeatletter
\def\th@plain{%
  \thm@notefont{}
  \itshape 
}
\def\th@definition{%
  \thm@notefont{}
  \normalfont 
}
\makeatother

\newtheorem{lemma}{Lemma}[section]
\newtheorem{proposition}[lemma]{Proposition}

\newtheorem{remark-definition}[lemma]{Remark-Definition}
\newtheorem{theorem}[lemma]{Theorem}
\newtheorem{corollary}[lemma]{Corollary}

\newtheorem{proposition-conjecture}[lemma]{Proposition-conjecture}

\theoremstyle{definition}

\newtheorem{definition}[lemma]{Definition}
\newtheorem{remark}[lemma]{Remark}

\newtheoremstyle{named}{}{}{\itshape}{}{\bfseries}{.}{.5em}{\thmnote{#3}}
\theoremstyle{named}
\newtheorem*{namedtheorem}{Theorem}



\usepackage{multirow}  

\usepackage[normalem]{ulem} 


\usepackage{tikz}
\usetikzlibrary{calc}

\definecolor{block}{RGB}{0,162,232}


\def\blockaux#1(#2,#3)#4(#5,#6){%
  \draw[fill={#1}]
  let \p1=(#2,#3),
      \p2=(#5,#6),
      \p3=(#2+#5,#3+#6),
      \p4=(#2+#5/2,#3+#6/2)
  in
    (\p1) rectangle (\p3)
    (\p4) node {$#4$}
  ;%
}

\begin{document}

\title{Shifts of semi-invariants and complete commutative \\
subalgebras in polynomial Poisson algebras }
\author{I.\,K.~Kozlov\thanks{No Affiliation, Moscow, Russia. E-mail: {\tt ikozlov90@gmail.com} }
}
\date{}

\maketitle

\begin{abstract} We study commutative subalgebras in the symmetric algebra $S(\mathfrak{g})$ of a finite-dimensional Lie algebra $\mathfrak{g}$. A.\,M.~Izosimov introduced extended Mischenko-Fomenko subalgebras $\tilde{\mathcal{F}}_a$ and gave a completeness criterion for them. We generalize his construction and extend Mischenko-Fomenko subalgebras with the shifts of all semi-invariants of $\mathfrak{g}$. We prove that the new commutative subalgebras have the same transcendence degree as $\tilde{\mathcal{F}}_a$. \end{abstract}

\tableofcontents

\section{Introduction}

In this paper we discuss a slight generalization of the results of A.\,M.~Izosimov from \cite{Izosimov14}. Hence, our introduction will be quite close to \cite{Izosimov14}.

Let $\mathfrak{g}$ be a finite-dimensional Lie algebra. For simplicity sake we assume that underlying field is $\mathbb{C}$, although all statements remain true for an arbitrary algebraically closed field $\mathbb{K}$ with $\operatorname{char} \mathbb{K} = 0$. The symmetric algebra $S(\mathfrak{g})$ can be naturally identified with the algebra of polynomials on the dual space $\mathfrak{g}^*$. The algebra $S(\mathfrak{g})$ has a natural Poisson bracket on it (called the \textbf{Lie-Poisson bracket}). The bracket is defined on linear functions by \[ \left\{\xi, \eta\right\} = [\xi, \eta], \qquad \forall \xi, \eta \in \mathfrak{g},\] and is extended to all polynomials by the Leibnitz identity.

We study the commutative subalgebras in $S(g)$. If $C \subset S(\mathfrak{g})$ is a commutative subalgebra, then \begin{equation}  \label{Eq:TrDeg}  \operatorname{tr. deg.} C \leq \frac{1}{2} \left( \dim \mathfrak{g} + \operatorname{ind} \mathfrak{g} \right).  \end{equation} If the equality holds  in \eqref{Eq:TrDeg}, then $C$ is called a \textbf{complete commutative subalgebra}. Each complete commutative subalgebra in $S(\mathfrak{g})$  can be regarded as an integrable system on the Poisson manifold $\mathfrak{g}^*$. There are several well-known methods for constructing commutative subalgebras in $ S(\mathfrak{g})$:

\begin{enumerate}

\item The \textbf{argument shift method} was introduced by A.\,S.~Mishchenko and A.\,T.~Fomenko in \cite{ArgShift}. It is a generalization of the S.\,V.~Manakov's construction \cite{Manakov76} for the Lie algebra $\operatorname{so}(n)$.  We describe a slight modification of the argument shift method, which is due to A.\,V.~Brailov (see A.\,V.~Bolsinov~\cite{Bolsinov14}).

Let $a \in \mathfrak{g}^*$ be an arbitrary regular element.  Denote the stabilizer of $x \in \mathfrak{g}^*$ w.r.t. the coadjoint representation  by \[ \mathfrak{g}_x  = \left\{ \xi \in \mathfrak{g} \, \, \bigr| \, \, \operatorname{ad}^*_{\xi}(x) = 0\right\}. \] There exist $m = \operatorname{ind} \mathfrak{g}$ local analytic invariants $f_1, \dots, f_m$ of the coadjoint representation defined in a neighbourhood of $a$ such that their differentials $df_i(a)$ form a basis of
$\mathfrak{g}_a$. Take the Taylor expansions of $f_i$ at $a$: \[ f_i(a + \lambda x) = \sum_{j=0}^{\infty} f_{ij} (x) \lambda^j,\]  where $f_{ij}(x)$ is a homogeneous polynomial in $x$ of degree $k$ and $\lambda$ is a formal parameter. 
\begin{definition} The algebra $\mathcal{F}_a \subset S(\mathfrak{g})$ generated by polynomials \[ f_{ij}, \qquad i = 1, \dots, \operatorname{ind} \mathfrak{g}, \, j > 0, \] is called the \textbf{algebra of (polynomial) shifts}. \end{definition}

It was proved in \cite{ArgShift} that $\mathcal{F}_a$ is a commutative subalgebra in $S(\mathfrak{g})$. Moreover, if $\mathfrak{g}$ is semisimple, then $\mathcal{F}_a$ is complete.

Completeness criterion for $\mathcal{F}_a$ was proved by A.\,V.~Bolsinov:
\begin{itemize}

\item Denote the set of singular elements\footnote{If the underlying field $\mathbb{K}$ is not algebraically closed, one should consider the singular set of $\mathfrak{g}^{\bar{\mathbb{K}}} = \mathfrak{g} \otimes_{\mathbb{K}} \bar{\mathbb{K}}$, where $\bar{\mathbb{K}}$ is the algebraic closure of $\mathbb{K}$. See \cite{BolsinovZuev}  for details.} in $\mathfrak{g}^*$ by \[ \operatorname{Sing} = \left\{ x \in \mathfrak{g}^*  \, \, \bigr| \, \, \dim \mathfrak{g}_x > \operatorname{ind} \mathfrak{g}\right\}.\]  In \cite{Bolsinov91} A.\,V.~Bolsinov proved that \textit{$\mathcal{F}_a$ is complete if and only if $\operatorname{codim} \operatorname{Sing} \geq 2$.}

\item In terms of the Jordan--Kronecker invariants of $\mathfrak{g}$ \textit{the algebra $\mathcal{F}_a$ is complete if and only if $\mathfrak{g}$ is of Kronecker type} (see  \cite{BolsZhang} for details).

\end{itemize}

\item In general, the algebra of shifts $\mathcal{F}_a$ is not complete. It was conjectured by A.\,S.~Mischenko and A.\,T.~Fomenko that \textit{for any finite-dimensional Lie algebra $\mathfrak{g}$, there exists a complete commutative subalgebra $C \subset S(\mathfrak{g})$}. This conjecture was proved by S.\,T.~Sadetov~\cite{Sadetov} (see also Bolsinov~\cite{Bolsinov05}). However, Sadetov's family $\mathcal{F} \subset S(\mathfrak{g})$ is essentially different from the algebra $\mathcal{F}_a$ of shifts. In particular, it is not commutative w.r.t. the following Poisson bracket. For any $a \in \mathfrak{g}^*$ there is a natural constant Poisson bracket  $\left\{ \cdot, \cdot\right\}_a$ on $S(\mathfrak{g})$ (the so-called \textbf{frozen argument bracket}). On linear functions it is given by \[ \left\{ \xi, \eta\right\}_a = \langle a, [\xi, \eta] \rangle, \qquad \xi, \eta \in \mathfrak{g},\] where $\langle \cdot, \cdot \rangle$ is the pairing between $\mathfrak{g}$ and $\mathfrak{g}^*$. Then the bracket $\left\{ \cdot, \cdot\right\}_a$ is extended to all polynomials by the Leibnitz identity. 

\begin{itemize}

\item The algebra of shifts $\mathcal{F}_a$ is commutative w.r.t. the both brackets $\left\{ \cdot, \cdot \right\}$ and $\left\{ \cdot, \cdot \right\}_a$. 

\item  The Sadetov's algebra $\mathcal{F}$ is commutative w.r.t. $\left\{ \cdot, \cdot \right\}$, but (in general) is not commutative w.r.t. $\left\{ \cdot, \cdot \right\}_a$. 

\end{itemize}

The following conjecture was stated by A.\,V.~Bolsinov and P.~Zhang in \cite{BolsZhang}.

\begin{namedtheorem}[Generalised argument shift conjecture] Let $\mathfrak{g}$ be an arbitrary finite-dimensional Lie algebra. Then for every regular element $a \in \mathfrak{g}^*$ there exists a complete family $\mathcal{G}_a \subset S(\mathfrak{g})$ of polynomials in bi-involution, i.e. in involution w.r.t. the two brackets $\left\{ \cdot, \cdot \right\}$ and $\left\{ \cdot, \cdot \right\}_a$. 
\end{namedtheorem}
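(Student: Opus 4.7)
The approach is to construct the family $\mathcal{G}_a$ by enlarging the Mishchenko--Fomenko algebra $\mathcal{F}_a$ with Taylor shifts at $a$ of all \emph{semi-invariants} of $\mathfrak{g}$. Recall that a semi-invariant is a polynomial $p \in S(\mathfrak{g})$ for which there exists a character $\chi_p \colon \mathfrak{g} \to \mathbb{C}$ with $\{\xi, p\} = \chi_p(\xi)\, p$ for all $\xi \in \mathfrak{g}$. For each such $p$ I would expand $p(a + \lambda x) = \sum_k \lambda^k p_k(x)$ and adjoin the homogeneous components $p_k$ to the generators of $\mathcal{F}_a$; this is the natural generalisation of Izosimov's extended algebra $\tilde{\mathcal{F}}_a$ to the full character lattice of $\mathfrak{g}$.

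The first step is to verify that $\mathcal{G}_a$ lies in bi-involution. The Leibnitz identity combined with $\chi_p([\mathfrak{g}, \mathfrak{g}]) = 0$ shows that for two semi-invariants $p, q$ the Lie--Poisson bracket $\{p, q\}$ is again a semi-invariant with character $\chi_p + \chi_q$. I would then write out $\{p(a + \lambda x), q(a + \mu x)\}$ and $\{p(a + \lambda x), q(a + \mu x)\}_a$ as polynomials in $\lambda, \mu$, show that each can be re-expressed through $\{p, q\}$ evaluated at $a + \lambda x$ or $a + \mu x$, and extract that every $(\lambda^i \mu^j)$-coefficient vanishes. The mixed case of a semi-invariant shift paired with an invariant shift is the same calculation with $\chi \equiv 0$, while the pure invariant-invariant case is the classical Brailov lemma.

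The second step is to compute $\operatorname{tr.\,deg.} \mathcal{G}_a$. The natural route is to establish first the equality $\operatorname{tr.\,deg.} \mathcal{G}_a = \operatorname{tr.\,deg.} \tilde{\mathcal{F}}_a$ by checking that at a generic regular point $x$ the differentials $\{dp_k(x)\}$ and the differentials used by Izosimov to define $\tilde{\mathcal{F}}_a$ span the same subspace of $T^*_x \mathfrak{g}^*$; this reduces to linear algebra on the Kronecker part of the pencil $\lambda\{\cdot, \cdot\}_a + \mu \{\cdot, \cdot\}$ and should yield the identity cleanly. Combined with Izosimov's completeness criterion for $\tilde{\mathcal{F}}_a$, this would deliver the required half-sum $\frac{1}{2}(\dim \mathfrak{g} + \operatorname{ind}\mathfrak{g})$ whenever Izosimov's hypotheses are satisfied.

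The genuine obstacle is that for an arbitrary finite-dimensional $\mathfrak{g}$ even $\tilde{\mathcal{F}}_a$ need not be complete, so shifts of semi-invariants alone may fail to produce enough independent functions. Closing this gap would require constructing further bi-involutive polynomials detecting those Kronecker blocks of the pencil on the singular set which are invisible to every character of $\mathfrak{g}$, amounting to a delicate structural analysis of the Jordan--Kronecker invariants away from the Frobenius locus. This is the step I expect to be the true bottleneck, and the one I would isolate from the (more routine) transcendence-degree comparison in the final write-up.
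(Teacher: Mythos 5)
You have not proved the statement, and neither does the paper: the ``Generalised argument shift conjecture'' is quoted in the introduction precisely as an open conjecture of Bolsinov and Zhang, with no proof offered. Your construction of $\mathcal{G}_a$ --- adjoining to $\mathcal{F}_a$ the Taylor coefficients of $p(a+\lambda x)$ for all semi-invariants $p$ --- is exactly the paper's algebra $\mathcal{F}^{\mathrm{si}}_a$, your first step (bi-involution) is Izosimov's theorem quoted in Section~1, and your second step (comparing differentials at a generic point) is essentially the paper's Theorem~\ref{T:MainTrDeg}. So up to that point you have reconstructed the actual content of the paper.

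The difficulty you flag in your last paragraph is, however, not a ``bottleneck'' that a more careful write-up could clear: it is a provable dead end for this route, and the paper's main theorem is precisely the proof of that. Theorem~\ref{T:MainTrDeg} shows $\operatorname{tr.\,deg.}\mathcal{F}^{\mathrm{si}}_a = \operatorname{tr.\,deg.}\tilde{\mathcal{F}}_a$, i.e.\ adjoining the shifts of \emph{all} semi-invariants produces nothing algebraically independent over what the single fundamental semi-invariant already gives. Combined with Izosimov's criterion (Theorem~\ref{T:IzosComp}), Corollary~\ref{Cor:CompFsi} states that $\mathcal{F}^{\mathrm{si}}_a$ is complete if and only if $\operatorname{Sing}_{\mathfrak{b}}$ is dense in $\operatorname{Sing}_0$; for Lie algebras where a codimension-one component of $\operatorname{Sing}$ has generic stabilizer not of the form $\operatorname{aff}(1)\oplus\mathbb{C}^s$, your $\mathcal{G}_a$ is therefore \emph{not} complete, and no refinement of the transcendence-degree computation can change that. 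The missing idea --- producing bi-involutive polynomials that see the Jordan blocks invisible to every semi-invariant --- is exactly what is still unknown, so the conjecture remains open and your proposal should be presented as a construction of a candidate family together with a proof of when it fails, not as a proof of the statement.
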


\item In \cite{Izosimov14} A.\,M.~Izosimov introduced extended Mischenko-Fomenko subalgebras $\tilde{\mathcal{F}}_a$ and gave a completeness criterion for them. This subalgebras are defined as follows. Let $t = \dim \mathfrak{g} - \operatorname{ind} \mathfrak{g}$. Fix a basis in $\mathfrak{g}$, and let $c_{ij}^k$ be the structure constants in this basis. Let $\displaystyle \mathcal{A}_x = \left(\sum_k c_{ij}^k x_k \right)$ be the structure matrix of $\mathfrak{g}$. Consider the Pfaffians $\operatorname{Pf}C_{i_1 \dots i_t}(x)$ of the $t\times t$ minors of the structure matrix $\displaystyle \mathcal{A}_x$.  The \textbf{fundamental semi-invariant} $p_{\mathfrak{g}}(x)$ is the the greatest common divisor of the Pfaffians $\operatorname{Pf}C_{i_1 \dots i_t}(x)$. Next, consider the polynomial $p_{\mathfrak{g}}(a + \lambda x)$ and expand it in powers of $\lambda$:
\begin{equation} \label{Eq:ExpPg} p_{\mathfrak{g}} (a + \lambda x) = \sum_{i=1}^n p_i(x) \lambda^i, \end{equation}  where $n = \deg p_{\mathfrak{g}}$. 

\begin{definition} The \textbf{extended Mischenko-Fomenko subalgebra} $\tilde{\mathcal{F}}_a$ is a subalgebra in $S(\mathfrak{g})$ generated by all elements of the algebra of shifts $\mathcal{F}_a$ and the polynomials $p_1, \dots, p_n$. \end{definition}

Unlike the Sadetov's algebra $\mathcal{F}$, and similar to the algebra of shifts $\mathcal{F}_a$, the polynomials from $\tilde{\mathcal{F}}_a$ are in bi-involution.

\begin{theorem}[A.\,M.~Izosimov, \cite{Izosimov14}] For each regular $a \in \mathfrak{g}^*$, the extended Mischenko-Fomenko subalgebra $\tilde{\mathcal{F}}_a$ is commutative w.r.t. the both brackets $\left\{ \cdot, \cdot \right\}$ and $\left\{ \cdot, \cdot \right\}_a$.  \end{theorem}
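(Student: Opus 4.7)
The plan is to exploit the compatible Poisson pencil $\mathcal{P}_{\nu}:=\{\cdot,\cdot\}+\nu\{\cdot,\cdot\}_a$, whose structure matrix at $y\in\mathfrak{g}^*$ is $\mathcal{A}_{y+\nu a}$. As a starting point, $p_{\mathfrak{g}}$ is a semi-invariant of the coadjoint action: it divides each Pfaffian $\operatorname{Pf}C_{i_1\ldots i_t}(x)$ of the structure matrix and these are themselves semi-invariants, so there exists a character $\gamma\in\mathfrak{g}^*$ (vanishing on $[\mathfrak{g},\mathfrak{g}]$) with $\mathcal{A}_y\nabla p_{\mathfrak{g}}(y)=p_{\mathfrak{g}}(y)\cdot\gamma$ for every $y$. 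Equivalently, $\{p_{\mathfrak{g}},\xi\}=\gamma(\xi)\,p_{\mathfrak{g}}$ for $\xi\in\mathfrak{g}$.

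Next I will prove two polynomial identities on $\mathfrak{g}^*$:
\[
\gamma(\nabla p_{\mathfrak{g}}(y))\equiv 0,\qquad \gamma(\nabla f(y))\equiv 0\ \text{for every Casimir } f.
\]
The first is immediate from $(\mathcal{A}_\gamma)_{ij}=\gamma([e_i,e_j])=0$: translation in the direction $\gamma$ leaves the structure matrix, and hence $p_{\mathfrak{g}}$, unchanged. The second is the crucial geometric input, and the step I expect to be the main obstacle: at a regular $y$ one has $p_{\mathfrak{g}}(y)\ne 0$, so the semi-invariance relation forces $\gamma=\mathcal{A}_y\nabla p_{\mathfrak{g}}(y)/p_{\mathfrak{g}}(y)\in\operatorname{Im}\mathcal{A}_y=\mathfrak{g}_y^{\perp}$; since $\nabla f(y)\in\mathfrak{g}_y$ for a Casimir, the pairing $\gamma(\nabla f(y))$ vanishes on the dense open regular locus, and therefore identically.

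With these identities in hand, I apply the pencil formula $\{F,G\}_{\nu}(y)=\langle y+\nu a,[\nabla F(y),\nabla G(y)]\rangle$ to $F_{\lambda}(y):=p_{\mathfrak{g}}(y+\lambda a)$ (a reparametrization of the generating function whose Taylor coefficients are the $p_i$). Specializing $\nu=\lambda$ and using the semi-invariance relation at $z=y+\lambda a$ gives $\{F_{\lambda},G\}_{\lambda}=\gamma(\nabla G)\cdot F_{\lambda}$. If $G=F_{\mu}$, the right-hand side vanishes by the first identity, and by symmetry so does $\{F_{\lambda},F_{\mu}\}_{\mu}$. If $G$ is the shift $f(y+\mu a)$ of a Casimir $f$, the right-hand side vanishes by the second identity, and moreover $\{F_{\lambda},G\}_{\mu}=0$ since $G$ is a Casimir of $\mathcal{P}_{\mu}$. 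In each case the affine polynomial $\nu\mapsto\{F_{\lambda},G\}_{\nu}=\{F_{\lambda},G\}+\nu\{F_{\lambda},G\}_a$ vanishes at the two distinct points $\nu=\lambda,\mu$, forcing both $\{F_{\lambda},G\}$ and $\{F_{\lambda},G\}_a$ to vanish identically in $\lambda,\mu$. Extracting the coefficient of $\lambda^i\mu^j$ proves pairwise bi-involution of the polynomials $p_i$ and bi-involution of each $p_i$ with every generator $f_{jk}$ of $\mathcal{F}_a$. Combined with the classical bi-commutativity of $\mathcal{F}_a$ itself and Leibniz, this yields the theorem.
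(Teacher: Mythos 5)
The paper does not actually prove this theorem: it is quoted from Izosimov \cite{Izosimov14} as background, so there is no in-paper proof to compare against. Your argument is essentially correct, and it is the standard bi-Hamiltonian (Lenard--Magri) two-point interpolation argument --- the same mechanism that powers the paper's Lemma~\ref{L:EigenSemiInv}, where the semi-invariance identity $\langle x,[df,dg]\rangle=\chi_g(df)\,g(x)$ is likewise evaluated along the pencil $\langle x-\lambda a,\cdot\rangle$, and that underlies Izosimov's own proof of the more general fact that shifts of arbitrary semi-invariants are in bi-involution. Two points deserve tightening. First, the individual Pfaffians $\operatorname{Pf}C_{i_1\dots i_t}$ need not themselves be semi-invariants; what is true is that their linear span (equivalently, the ideal they generate) is invariant under the coadjoint action, and it is this that forces their greatest common divisor $p_{\mathfrak{g}}$ to be a semi-invariant --- the conclusion you need is correct and standard, but the stated reason is not. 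Second, the generators $f_{ij}$ of $\mathcal{F}_a$ are Taylor coefficients of \emph{local analytic} invariants defined near $a$, not shifts of global polynomial Casimirs; so your identity $\gamma(\nabla f)\equiv 0$ and the vanishing of the affine function $\nu\mapsto\{F_\lambda,G\}_\nu$ at two distinct values of $\nu$ must be run for a local invariant on its (connected) domain of definition, where $\nabla f(z)\in\mathfrak{g}_z=\operatorname{Ker}\mathcal{A}_z$ still holds and the regular locus is still dense, after which the polynomial relations among the $p_i$ and the $f_{jk}$ are extracted from the locally valid generating-function identity. Both repairs are routine; the core chain --- $p_{\mathfrak{g}}\neq 0$ on the regular locus, hence $\gamma=\mathcal{A}_y\nabla p_{\mathfrak{g}}(y)/p_{\mathfrak{g}}(y)\in\operatorname{Im}\mathcal{A}_y=\operatorname{Ann}(\mathfrak{g}_y)$ there, hence $\gamma$ annihilates $\nabla p_{\mathfrak{g}}$ and the differentials of invariants, hence both brackets vanish by interpolation in $\nu$ --- is sound.
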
 

The completeness criterion is formulated is follows. 

\begin{itemize}

\item Let $\operatorname{Sing}_0$ be the union of all irreducible components of $\operatorname{Sing}$ that have dimension $\dim \mathfrak{g}-1$. (If  $\operatorname{codim} \operatorname{Sing} \geq 2$, then $\operatorname{Sing}_0 = \emptyset$.) 

\item Consider the subset \[ 
\operatorname{Sing}_{\mathfrak{b}} = \left\{y \in \operatorname{Sing}_0 \, \, \bigr| \, \, \mathfrak{g}_y \simeq \operatorname{aff}(1) \oplus \mathbb{C}^s  \right\} \subset \operatorname{Sing}_0, \] where $\operatorname{aff}(1)$ is the $2$-dimensional non-abelian Lie algebra. In can be shown that $\operatorname{Sing}_{\mathfrak{b}}$  is always Zariski open in $\operatorname{Sing}_0$, but might be empty.

\end{itemize}

\begin{theorem}[A.\,M.~Izosimov, \cite{Izosimov14}] \label{T:IzosComp} Let $\mathfrak{g}$ be a finite-dimensional complex Lie algebra and $a \in \mathfrak{g}^*$ be a regular element. The extended Mischenko-Fomenko subalgebra $\tilde{\mathcal{F}}_a$ is complete if and only if $\operatorname{Sing}_{\mathfrak{b}}$  is dense in $\operatorname{Sing}_0$. \end{theorem}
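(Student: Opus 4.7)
My overall strategy is to use the bi-Hamiltonian reformulation of transcendence degree: $\operatorname{tr.deg} \tilde{\mathcal{F}}_a = \max_x \dim\operatorname{span}\{df_{ij}(x), dp_k(x)\}$, and to analyse the right-hand side via the Kronecker--Jordan decomposition of the pencil $\{\cdot,\cdot\} + \lambda \{\cdot,\cdot\}_a$ (equivalently, of the matrix pencil $\mathcal{A}_x + \lambda \mathcal{A}_a$) at a generic point $x \in \mathfrak{g}^*$. Since by Izosimov's first theorem the generators are in bi-involution, the key question is genuinely a rank computation, not a commutativity one.

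The first step is to recall, following \cite{Bolsinov91, BolsZhang}, that the defect $\frac{1}{2}(\dim \mathfrak{g} + \operatorname{ind} \mathfrak{g}) - \operatorname{tr.deg} \mathcal{F}_a$ is controlled by the Jordan part of the pencil decomposition, and that this defect splits as a sum of contributions indexed by the irreducible components of $\operatorname{Sing}_0$. Each component $S \subseteq \operatorname{Sing}_0$ contributes a $\delta_S > 0$ determined by the pencil's local normal form at a generic $y \in S$, which in turn is governed by the Lie-algebraic structure of the stabilizer $\mathfrak{g}_y$.

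The central step is a local lemma at a generic $y \in \operatorname{Sing}_0$. I would prove the equivalence: $\mathfrak{g}_y \simeq \operatorname{aff}(1) \oplus \mathbb{C}^s$ if and only if both (i) the Jordan part of the pencil at $y$ contributes exactly a single $2 \times 2$ block, and (ii) the fundamental semi-invariant $p_{\mathfrak{g}}$ vanishes on the component $S$ through $y$ with multiplicity exactly $1$. Granting (i)--(ii), a direct linear-algebra calculation shows that the first shift $p_1(x) = \langle dp_{\mathfrak{g}}(a), x\rangle$ provides a differential which, at a suitable $x$ close to $y$, does not lie in the Mishchenko--Fomenko span, and thereby reduces the defect by exactly $1$ per component. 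Conversely, if the generic stabilizer on some component is of another type, then either the Jordan block is strictly larger than $2 \times 2$, or $p_{\mathfrak{g}}$ vanishes to higher order along $S$; in either case the combined contribution of all shifts $p_k$ coming from that single component cannot cover its full local defect.

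Summing contributions over components, $\tilde{\mathcal{F}}_a$ is complete iff every irreducible component of $\operatorname{Sing}_0$ is of $\operatorname{aff}(1)$-type on a Zariski open subset, i.e.\ iff $\operatorname{Sing}_{\mathfrak{b}}$ is dense in $\operatorname{Sing}_0$. I expect the main difficulty to lie in the central step, which demands an explicit normal form for $(\mathcal{A}_x, \mathcal{A}_a)$ transversal to $S$ near a generic singular point of each possible stabilizer type, and a matching analysis of the Pfaffians of $t \times t$ principal minors that define $p_{\mathfrak{g}}$, in order to pin down its vanishing multiplicity along $S$. The $\operatorname{aff}(1)$ condition is the precise algebraic criterion that simultaneously forces a $2\times 2$ Jordan block and a simple zero of $p_{\mathfrak{g}}$, and this double matching is what makes the theorem work.
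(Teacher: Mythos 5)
The paper does not prove this statement. Theorem~\ref{T:IzosComp} is quoted from \cite{Izosimov14} as background for the paper's actual contribution (Theorem~\ref{T:MainTrDeg}), so there is no in-paper proof to compare against. Judged on its own terms, your sketch does follow the strategy of the cited reference: the completeness defect of $\mathcal{F}_a$ at a generic $x$ is governed by the Jordan part of the Jordan--Kronecker decomposition of the pencil $\mathcal{A}_x + \lambda \mathcal{A}_a$, the eigenvalues of that Jordan part are the parameters at which the line $x + \lambda a$ meets $\operatorname{Sing}_0$, and the stabilizer condition $\mathfrak{g}_y \simeq \operatorname{aff}(1) \oplus \mathbb{C}^s$ is precisely what forces a single $2\times 2$ Jordan block at the corresponding eigenvalue.

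Two points in your outline need repair before this becomes a proof. First, the bookkeeping is off: an irreducible component $S=\{h=0\}$ of $\operatorname{Sing}_0$ meets the line $x+\lambda a$ in $\deg h$ points, so it contributes $\deg h$ characteristic numbers, and the shifts $p_1,\dots,p_n$ taken together (not $p_1$ alone, whose differential need not be the relevant one) span exactly one new direction per \emph{distinct characteristic number} $\lambda_j(x)$, namely $d\lambda_j(x)\in\operatorname{Ker}(\mathcal{A}_x-\lambda_j(x)\mathcal{A}_a)$. Hence the defect can be reduced by at most one per eigenvalue, not ``one per component,'' and completeness requires each eigenvalue to carry exactly one Jordan block, of size $2\times 2$; the case of several $2\times 2$ blocks sharing an eigenvalue (which also violates the $\operatorname{aff}(1)\oplus\mathbb{C}^s$ condition) is only implicitly covered by your dichotomy. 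Relatedly, your condition (ii) on the vanishing multiplicity of $p_{\mathfrak{g}}$ along $S$ is not an independent ingredient: the multiplicity of a root of $p_{\mathfrak{g}}(x-\lambda a)$ equals half the total size of the Jordan blocks at that eigenvalue, and in any case the span of the $dp_k$ sees only the \emph{distinct} roots, so multiplicities are irrelevant to the rank count. Second, you assert but do not argue that $d\lambda_j(x)$ genuinely enlarges the Mishchenko--Fomenko span, i.e.\ has a nonzero component in the Jordan block rather than lying in $d\mathcal{F}_a(x)=\sum_{\lambda\ \mathrm{generic}}\operatorname{Ker}(\mathcal{A}_x-\lambda\mathcal{A}_a)$. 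This transversality is a genuine step --- in the present paper the inclusion $d\lambda(x)\in\operatorname{Ker}(\mathcal{A}_x-\lambda(x)\mathcal{A}_a)$ is exactly the content of Lemma~\ref{L:EigenSemiInv} and Remark~\ref{Rem:Cond2Mat} --- and without it the ``if'' direction of the criterion is incomplete.
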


\begin{remark}  If $\operatorname{codim} \operatorname{Sing} \geq 2$, then   $\tilde{\mathcal{F}}_a$  is complete, because it contains the algebra of shifts $\mathcal{F}_a \subseteq \tilde{\mathcal{F}}_a$.  In the case $\operatorname{codim} \operatorname{Sing} \geq 2$ we formally have \begin{equation} \label{Eq:Empty} \operatorname{Sing}_0 = \operatorname{Sing}_{\mathfrak{b}} = \emptyset.\end{equation} We formally assume that \eqref{Eq:Empty} satisfies the condition ``$\operatorname{Sing}_{\mathfrak{b}}$  is dense in $\operatorname{Sing}_0$'', and Theorem~\ref{T:IzosComp} holds.  \end{remark}

\end{enumerate} 

A non-zero element $g\in S(\mathfrak{g})$ is a \textbf{semi-invariant with weight} $\chi_g \in \mathfrak{g}^*$ if \begin{equation} \label{Eq:SemiDef} \left\{ f, g\right\}_x = \chi_g(df) g(x)\end{equation} for any other $f \in S(\mathfrak{g})$. In this paper we study the following polynomial algebra. 

\begin{definition} The \textbf{algebra of shift of semi-invariants} $\mathcal{F}^{\mathrm{si}}_a$ is a subalgebra of $S(\mathfrak{g})$ generated by all elements of the algebra of shifts $\mathcal{F}_a$ and all the functions $g(x + \lambda a)$, for all semi-invariants $g(x)$ and all $\lambda \in \mathbb{C}$.  \end{definition}

The next statement was proved in \cite[Section 4]{Izosimov14}.

\begin{theorem}[A.\,M.~Izosimov, \cite{Izosimov14}] Let $\mathfrak{g}$ be a finite-dimensional Lie algebra, $a \in \mathfrak{g}^*$. Then for any two semi-invariants $f, g \in S(\mathfrak{g})$ and any $\lambda, \mu \in \mathbb{C}$, \[ \left\{ f(a + \lambda x), g(a + \lambda x)\right\} = 0, \qquad \left\{ f(a + \lambda x), g(a + \lambda x)\right\}_a = 0.\] \end{theorem}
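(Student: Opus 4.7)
My plan is to reduce the bi-involution to the elementary fact that any two semi-invariants of $\mathfrak{g}$ already Poisson-commute, and then to propagate this to their shifts via the pullback under translation by $a$, exploiting the compatibility of the two brackets $\{\cdot, \cdot\}$ and $\{\cdot, \cdot\}_a$. For brevity I work with the translated shifts $F_\lambda(x) := f(x + \lambda a)$: a homogeneous semi-invariant $f$ of degree $d$ satisfies $f(a + \lambda x) = \lambda^d f(x + \lambda^{-1} a)$ for $\lambda \ne 0$, and every semi-invariant decomposes into homogeneous summands (each itself a semi-invariant of the same weight), so the two families of shifts generate the same algebra.

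\textbf{Core lemma: semi-invariants Poisson-commute.} Applying the defining relation $\{h, g\} = \chi_g(dh)\,g$ with $h = f$ and then with $f, g$ interchanged, antisymmetry yields the polynomial identity $\chi_g(df)\,g + \chi_f(dg)\,f = 0$ in $S(\mathfrak{g})$. A standard lemma shows that every irreducible factor of a semi-invariant is itself a semi-invariant: writing $g = \prod g_i^{n_i}$ in the UFD $S(\mathfrak{g})$ and expanding $\{\xi, g\} = \chi_g(\xi)g$ via the Leibniz rule, reduction modulo each $g_i$ forces $g_i \mid \{\xi, g_i\}$; a degree count then yields $\{\xi, g_i\} = \psi_i(\xi)\,g_i$ for some $\psi_i \in \mathfrak{g}^*$. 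For two distinct (non-proportional) irreducible semi-invariants $p, q$, the polynomial identity above combined with $\gcd(p, q) = 1$ gives $p \mid \chi_q(dp)$; since $\deg \chi_q(dp) < \deg p$ this forces $\chi_q(dp) \equiv 0$ as a polynomial (the case of proportional factors follows from $\{p, p\} = 0$). Extending by the Leibniz rule to arbitrary semi-invariants shows that $\chi_g(df) \equiv 0 \equiv \chi_f(dg)$ as polynomials on $\mathfrak{g}^*$, whence in particular $\{f, g\} = 0$.

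\textbf{Extension to shifts and conclusion.} A direct computation from the definitions gives the pullback formula
\[
\phi_\lambda^*\{u, v\} \;=\; \{\phi_\lambda^* u, \phi_\lambda^* v\} + \lambda\{\phi_\lambda^* u, \phi_\lambda^* v\}_a
\]
for the translation $\phi_\lambda(x) = x + \lambda a$, reflecting translation-invariance of the frozen bracket. Pulling back the semi-invariance of $f$ yields, for every $H \in S(\mathfrak{g})$,
\[
\{H, F_\lambda\} + \lambda\{H, F_\lambda\}_a \;=\; \chi_f(dH)\,F_\lambda,
\]
so $F_\lambda$ is a semi-invariant of the pencil bracket $\{\cdot, \cdot\} + \lambda\{\cdot, \cdot\}_a$ with weight $\chi_f$. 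Applying this identity with $H = G_\mu$ and, symmetrically, the analogue for $G_\mu$ with $H = F_\lambda$, antisymmetry gives the pair
\[
\{F_\lambda, G_\mu\} + \lambda\{F_\lambda, G_\mu\}_a = -\chi_f(dG_\mu)\,F_\lambda, \qquad \{F_\lambda, G_\mu\} + \mu\{F_\lambda, G_\mu\}_a = \chi_g(dF_\lambda)\,G_\mu.
\]
By the core lemma, the polynomials $\chi_f(dg)$ and $\chi_g(df)$ vanish identically, so evaluation at the shifted points $x + \mu a$ and $x + \lambda a$ kills both right-hand sides. Subtracting the two relations yields $(\mu - \lambda)\{F_\lambda, G_\mu\}_a = 0$; hence both brackets vanish for $\lambda \ne \mu$, and since they depend polynomially on $\lambda, \mu$, Zariski density extends the vanishing to $\lambda = \mu$.

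\textbf{Main obstacle.} The crux is the core lemma: lifting the pointwise identity $\chi_g(df)\,g + \chi_f(dg)\,f = 0$ to the individual polynomial vanishing $\chi_g(df) \equiv 0$, which in turn rests on the factorisation of semi-invariants into irreducible semi-invariants and the coprimeness/degree argument. Once this polynomial vanishing is in hand, the propagation to shifts is a formal consequence of the compatibility of the two brackets.
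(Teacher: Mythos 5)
The paper does not prove this theorem: it is quoted from \cite[Section 4]{Izosimov14}, so there is no in-text argument to compare against. Judged on its own, your proof is correct and self-contained. The decisive point is your strengthened core lemma: not merely $\{f,g\}=0$, but the identity $\chi_g(df)\equiv 0$ as a polynomial on $\mathfrak{g}^*$, obtained by factoring semi-invariants into irreducible semi-invariants and running the coprimality/degree count on $\chi_q(dp)\,q=-\chi_p(dq)\,p$. That is exactly what is needed, because after pulling back through $\phi_\lambda$ the right-hand sides $\chi_f(dG_\mu)F_\lambda$ and $\chi_g(dF_\lambda)G_\mu$ are evaluations of these polynomials at shifted points, so the weaker statement $\{f,g\}=0$ alone would not kill them. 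The pullback identity $\phi_\lambda^*\{u,v\}=\{\phi_\lambda^*u,\phi_\lambda^*v\}+\lambda\{\phi_\lambda^*u,\phi_\lambda^*v\}_a$ is verified directly from $\langle x+\lambda a,\,[du,dv]\rangle$, and the final subtraction giving $(\mu-\lambda)\{F_\lambda,G_\mu\}_a=0$ plus Zariski density in $(\lambda,\mu)$ is sound; the reduction between the families $f(a+\lambda x)$ and $f(x+\lambda a)$ via homogeneous components (each of which is again a semi-invariant of the same weight) is also fine. Your argument is in the same spirit as the paper's Section~\ref{S:RootSemiInv}, where the shifted semi-invariance identity \eqref{Eq:SemiInvChange1} plays the role of your pullback formula, but it is an independent and complete derivation of the cited theorem.
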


Thus, similar to $\mathcal{F}_a$ and $\tilde{\mathcal{F}}_a$, the polynomials in  $\mathcal{F}^{\mathrm{si}}_a$ are in bi-involution.

\begin{corollary} For each regular $a \in \mathfrak{g}^*$ algebra of shift of semi-invariants $\mathcal{F}^{\mathrm{si}}_a$ is commutative w.r.t. the both brackets $\left\{ \cdot, \cdot \right\}$ and $\left\{ \cdot, \cdot \right\}_a$.  \end{corollary}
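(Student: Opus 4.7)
The plan is to use the Leibniz rule for Poisson brackets to reduce the claim to pairwise bi-commutativity of a generating set of $\mathcal{F}^{\mathrm{si}}_a$: a subalgebra generated by a set $\Sigma$ is Poisson-commutative w.r.t.\ a given bracket if and only if the elements of $\Sigma$ pairwise commute w.r.t.\ that bracket. Hence it suffices to check bi-commutation for any two generators of $\mathcal{F}^{\mathrm{si}}_a$. By construction these generators split into two families: (i) the generators $f_{ij}$ of the classical algebra of shifts $\mathcal{F}_a$, and (ii) the shifted polynomial semi-invariants $g(x+\lambda a)$ for semi-invariants $g \in S(\mathfrak{g})$ and $\lambda \in \mathbb{C}$. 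Three cases thus arise.

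Case (i)--(i) is the classical argument-shift theorem of Mishchenko--Fomenko and Brailov: $\mathcal{F}_a$ itself is commutative with respect to both brackets. For case (ii)--(ii) one may reduce to homogeneous semi-invariants, since each homogeneous component of a semi-invariant inherits the same weight (both brackets preserve degree on linear functions). For a homogeneous semi-invariant $f$ of degree $p$ and non-zero $\lambda$ we have $f(x+\lambda a) = \lambda^{p} f(a+\lambda^{-1}x)$; hence bi-involution of $f(x+\lambda a)$ with $g(x+\mu a)$ reduces to that of $f(a+\lambda' x)$ with $g(a+\mu' x)$ supplied by the preceding theorem. The boundary cases $\lambda=0$ or $\mu=0$ are handled by passing to Taylor coefficients in $\lambda, \mu$ of the identity so obtained.

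The main obstacle is the mixed case (i)--(ii). The $f_{ij}$ are Taylor coefficients at $a$ of \emph{local analytic} coadjoint invariants $f_i$, which in general need not be polynomials on $\mathfrak{g}^*$, so they do not formally fall under the preceding theorem. I would resolve this by revisiting its proof from \cite[Section~4]{Izosimov14}. That proof is a local pointwise computation using only the defining identity \eqref{Eq:SemiDef} of a semi-invariant and the linearity of the two brackets. A local analytic invariant $f_i$ trivially satisfies \eqref{Eq:SemiDef} with weight zero on its domain of analyticity $U$. Hence the identical computation, applied to $f_i$ on $U$ and to a polynomial semi-invariant $g$ globally, yields
\[
\{f_i(a+\lambda x), g(a+\mu x)\} = 0, \qquad \{f_i(a+\lambda x), g(a+\mu x)\}_a = 0
\]
as identities in $\lambda, \mu$ pointwise on $U$. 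Expanding in powers of $\lambda$ and $\mu$ produces polynomial relations expressing the vanishing of the brackets of each $f_{ij}$ with each Taylor coefficient of $g(a+\mu x)$, a priori only on $U$. But a polynomial on $\mathfrak{g}^*\cong \mathbb{C}^{\dim \mathfrak{g}}$ which vanishes on a non-empty Euclidean-open set vanishes identically, so these bi-commutation relations hold on all of $\mathfrak{g}^*$. Combined with the first two cases and the Leibniz reduction, this proves the corollary.
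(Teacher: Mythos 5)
Your proof is correct and follows the same route the paper takes: the paper derives this corollary directly from the preceding theorem on shifts of semi-invariants (together with the known bi-commutativity of $\mathcal{F}_a$), offering no further argument beyond ``thus''. You have in fact supplied more detail than the paper does --- notably the rescaling $g(x+\lambda a)=\lambda^{p}g(a+\lambda^{-1}x)$ relating the two forms of the shift, and the mixed case handled by treating the local analytic invariants $f_i$ as weight-zero semi-invariants and extending the resulting polynomial identities from an open set to all of $\mathfrak{g}^*$ --- which are exactly the points the paper leaves implicit.
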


The difference between $\mathcal{F}^{\mathrm{si}}_a$  and $\tilde{\mathcal{F}}_a$ is quite simple:

\begin{itemize}

\item The extended Mischenko-Fomenko subalgebra $\tilde{\mathcal{F}}_a$ is generated by $\mathcal{F}_a$ and the shifts of the fundamental semi-invariant $p_{\mathfrak{g}}(a + \lambda x)$.

\item The algebra of shift of semi-invariants $\mathcal{F}^{\mathrm{si}}_a$ is generated by $\mathcal{F}_a$ and the shifts of all semi-invariants $g(a + \lambda x)$.

\end{itemize}

Obviously, $\tilde{\mathcal{F}}_a \subseteq \mathcal{F}^{\mathrm{si}}_a$. Our main result is

\begin{theorem} \label{T:MainTrDeg}  Let $\mathfrak{g}$ be a finite-dimensional Lie algebra and $a \in \mathfrak{g}^*$ be a regular element.  Then the algebra of shift of semi-invariants $\mathcal{F}^{\mathrm{si}}_a$ and the extended Mischenko-Fomenko subalgebra  $\tilde{\mathcal{F}}_a$ have the same transcendence degree: \begin{equation} \label{Eq:EqTrDeg}\operatorname{tr. deg.}  \mathcal{F}^{\mathrm{si}}_a = \operatorname{tr. deg.} \tilde{\mathcal{F}}_a.\end{equation}
\end{theorem}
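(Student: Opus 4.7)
Since $\tilde{\mathcal{F}}_a \subseteq \mathcal{F}^{\mathrm{si}}_a$, the inequality $\operatorname{tr. deg.} \tilde{\mathcal{F}}_a \leq \operatorname{tr. deg.} \mathcal{F}^{\mathrm{si}}_a$ is automatic, so my plan is to establish the reverse inequality by showing that $\operatorname{Frac}(\mathcal{F}^{\mathrm{si}}_a)$ is algebraic over $\operatorname{Frac}(\tilde{\mathcal{F}}_a)$. This reduces to verifying that for each polynomial semi-invariant $g \in S(\mathfrak{g})$, every coefficient of $g(a + \lambda x)$ in its expansion in $\lambda$ is algebraic over $\tilde{\mathcal{F}}_a$.

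The first, and most delicate, ingredient is a structural decomposition of polynomial semi-invariants. The fundamental semi-invariant factorizes uniquely in the UFD $S(\mathfrak{g})$ as $p_\mathfrak{g} = p_1^{m_1} \cdots p_r^{m_r}$ with pairwise non-associate irreducibles $p_1, \ldots, p_r$; since the connected adjoint group $G$ must fix the set of irreducible factors up to scalar, each $p_i$ is itself a semi-invariant. The non-trivial fact I would invoke (going back to work of Ra\"{\i}s and Ooms on the structure of semi-invariants) is that every irreducible polynomial semi-invariant with non-trivial weight occurs, up to scalar, among the $p_i$. Applied to the prime factorization of an arbitrary polynomial semi-invariant $g$, and collecting the invariant factors, this yields a decomposition
\[
g = p_1^{n_1} \cdots p_r^{n_r} \cdot I, \qquad n_i \in \mathbb{Z}_{\geq 0},\ I \in S(\mathfrak{g})^{\mathfrak{g}}.
\]

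The remaining two ingredients handle the two types of factors. For any polynomial invariant $I$, writing $I = \phi(f_1, \ldots, f_m)$ in a neighbourhood of $a$ (with analytic $\phi$) and Taylor-expanding $I(a + \lambda x)$ in $\lambda$ makes each coefficient a polynomial expression in the $f_{ij}$; hence shifts of $I$ lie in $\mathcal{F}_a \subseteq \tilde{\mathcal{F}}_a$. For the $p_i$-factors, set $P(x, \lambda) := p_\mathfrak{g}(a + \lambda x)$ and $P_i(x, \lambda) := p_i(a + \lambda x)$ in $\mathbb{C}[x][\lambda]$. Then $P = \prod_i P_i^{m_i}$ is the unique factorization of $P$ in the UFD $\mathbb{C}[x, \lambda]$, and the $P_i$ are pairwise coprime in $\mathbb{C}(x)[\lambda]$ for generic $x$ (since distinct $p_i$ are already coprime in $\mathbb{C}[x]$). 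A standard symmetric-function / resultant argument then produces polynomial relations satisfied by the coefficients of each $P_i$ over the ring generated by the coefficients of $P$; equivalently, the shifts of each individual $p_i$ are algebraic over the shifts of $p_\mathfrak{g}$, hence over $\tilde{\mathcal{F}}_a$.

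Putting the three ingredients together, every shift $g(a + \lambda x) = \prod_i P_i(x, \lambda)^{n_i} \cdot I(a + \lambda x)$ is algebraic over $\tilde{\mathcal{F}}_a$, proving \eqref{Eq:EqTrDeg}. I expect the main obstacle to be the structural claim in the first step --- the assertion that the irreducible polynomial semi-invariants with non-trivial weight are exhausted by the prime factors of $p_\mathfrak{g}$ --- which requires either a precise reference in the literature or a proof via the correspondence between irreducible polynomial semi-invariants and $G$-invariant codimension-$1$ irreducible hypersurfaces in $\mathfrak{g}^*$, combined with the observation that the zero locus of $p_\mathfrak{g}$ is precisely the codimension-$1$ part $\operatorname{Sing}_0$ of the singular set. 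The third step, while conceptually routine via Galois theory of factorization, still needs a careful generic-point analysis to justify the coprimality of the $P_i$.
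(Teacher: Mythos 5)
Your overall strategy (reduce to showing that each coefficient of $g(a+\lambda x)$ is algebraic over $\operatorname{Frac}(\tilde{\mathcal{F}}_a)$, and dispose of invariant factors via $\mathcal{F}_a$) matches the paper's first reduction, but the load-bearing step --- the structural claim that every irreducible polynomial semi-invariant with non-trivial weight is, up to scalar, a prime factor of $p_{\mathfrak{g}}$ --- is false, and no reference will save it. Take $\mathfrak{g}$ with basis $e_0,e_1,e_2$ and brackets $[e_0,e_1]=e_1$, $[e_0,e_2]=2e_2$, $[e_1,e_2]=0$. Then $x_1$ and $x_2$ are irreducible semi-invariants with non-zero weights $e_0^*$ and $2e_0^*$, while the $2\times 2$ Pfaffians of the structure matrix are $x_1$, $2x_2$ and $0$, so $p_{\mathfrak{g}}=\gcd(x_1,2x_2)=1$ has no prime factors at all. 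The underlying geometric error is the assumption that the zero divisor of a proper semi-invariant lies in $\operatorname{Sing}$: the hypersurface $\{x_1=0\}$ in this example consists of regular points wherever $x_2\neq 0$, so it is a $G$-invariant irreducible hypersurface that is not a component of $\operatorname{Sing}$. Consequently the decomposition $g=p_1^{n_1}\cdots p_r^{n_r}\cdot I$ with $I$ invariant simply does not exist for a general semi-invariant $g$, and your argument has no mechanism for the leftover irreducible factors. (The theorem still holds in the example only because $\operatorname{codim}\operatorname{Sing}\geq 2$ there, so $\mathcal{F}_a$ is already complete --- but that is an accident of the example, not a consequence of your argument.)

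The paper avoids any global factorization of $g$. It passes to the roots $\lambda_1(x),\dots,\lambda_d(x)$ of $g(x-\lambda a)=0$, shows (Proposition~\ref{P:DiffLG}) that $\operatorname{span}(dg_i(x))=\operatorname{span}(d\lambda_j(x))$ generically, and then splits root by root rather than factor by factor: if $p_{\mathfrak{g}}(x-\lambda_j(x)a)\equiv 0$, then $\lambda_j$ is also a root for the fundamental semi-invariant and $d\lambda_j$ lies in the span of the differentials of the shifts of $p_{\mathfrak{g}}$, hence in $d\tilde{\mathcal{F}}_a(x)$; otherwise the shifted point $x-\lambda_j(x)a$ avoids $\operatorname{Sing}$ (after arranging that the line $x+\lambda a$ misses the codimension-$\geq 2$ part $\operatorname{Sing}_1$), and Lemma~\ref{L:EigenSemiInv} gives $d\lambda_j(x)\in\operatorname{Ker}(\mathcal{A}_x-\lambda_j\mathcal{A}_a)$, which by the Bolsinov--Zhang description $d\mathcal{F}_a(x)=\sum_{x-\lambda a\notin\operatorname{Sing}}\operatorname{Ker}(\mathcal{A}_x-\lambda\mathcal{A}_a)$ already lies in $d\mathcal{F}_a(x)$. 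It is exactly this second branch that absorbs semi-invariants such as $x_1$ above, and it is the ingredient your proposal is missing; to repair your argument you would need to replace the false structural claim by this pointwise dichotomy on the roots.
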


Consider the following subspace of $T^*_x \mathfrak{g}^* \simeq \mathfrak{g}$ for the algebra of shifts $\mathcal{F}_a$:
\[d \mathcal{F}_a (x) = \left\{ df(x) \,\, \bigr| \,\, f \in \mathcal{F}_a\right\}, \qquad x \in \mathfrak{g}^*. \] Also consider similar subspaces $d \tilde{\mathcal{F}}_a (x)$ and $d\mathcal{F}^{\mathrm{si}}_a (x)$ for the algebras $\tilde{\mathcal{F}}_a$ and $\mathcal{F}^{\mathrm{si}}_a$ respectively. Since $\dim d \mathcal{F}_a (x) \leq \operatorname{tr. deg.} \mathcal{F}_a$ (and the similar inequality holds for  $ \tilde{\mathcal{F}}_a (x)$ and $\mathcal{F}^{\mathrm{si}}_a $ ) we get the following.

\begin{corollary} Let $a \in \mathfrak{g}^*$ be regular. Then \[d \tilde{\mathcal{F}}_a(x) \subseteq d\mathcal{F}^{\mathrm{si}}_a (x) \qquad \text {  and } \qquad \dim d\mathcal{F}^{\mathrm{si}}_a (x) \leq \operatorname{tr. deg.} \tilde{\mathcal{F}}_a \] for all $x \in \mathfrak{g}^*$. Thus, for all $x$ such that $\dim d \tilde{\mathcal{F}}_a(x)  = \operatorname{tr. deg.} \tilde{\mathcal{F}}_a$ we have \begin{equation} \label{Eq:EqualDF} d \tilde{\mathcal{F}}_a (x) = d\mathcal{F}^{\mathrm{si}}_a(x).\end{equation} In particular, \eqref{Eq:EqualDF} holds on an open dense subset of $\mathfrak{g}^*$. \end{corollary}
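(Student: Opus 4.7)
The corollary is essentially a direct sandwich argument using Theorem~\ref{T:MainTrDeg}, so my plan has only a few steps.

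First, I would dispose of the two preliminary assertions. The inclusion $d\tilde{\mathcal{F}}_a(x) \subseteq d\mathcal{F}^{\mathrm{si}}_a(x)$ is immediate from $\tilde{\mathcal{F}}_a \subseteq \mathcal{F}^{\mathrm{si}}_a$, since every differential $df(x)$ with $f \in \tilde{\mathcal{F}}_a$ is already a differential of an element of $\mathcal{F}^{\mathrm{si}}_a$. For the bound $\dim d\mathcal{F}^{\mathrm{si}}_a(x) \leq \operatorname{tr.deg.}\tilde{\mathcal{F}}_a$, I would invoke the general fact that for any subalgebra $A$ of a polynomial algebra and any point $x$, one has $\dim dA(x) \leq \operatorname{tr.deg.} A$ (a spanning set of differentials at $x$ lifts to an algebraically independent family in $A$ via the implicit function theorem / Jacobian criterion). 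Applying this to $A = \mathcal{F}^{\mathrm{si}}_a$ and then substituting $\operatorname{tr.deg.}\mathcal{F}^{\mathrm{si}}_a = \operatorname{tr.deg.}\tilde{\mathcal{F}}_a$ from Theorem~\ref{T:MainTrDeg} yields the claimed inequality.

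Next, at any $x$ with $\dim d\tilde{\mathcal{F}}_a(x) = \operatorname{tr.deg.}\tilde{\mathcal{F}}_a$, I would read off \eqref{Eq:EqualDF} from the chain
\[
\operatorname{tr.deg.}\tilde{\mathcal{F}}_a \;=\; \dim d\tilde{\mathcal{F}}_a(x) \;\leq\; \dim d\mathcal{F}^{\mathrm{si}}_a(x) \;\leq\; \operatorname{tr.deg.}\tilde{\mathcal{F}}_a,
\]
which forces equality of dimensions throughout; combined with the inclusion $d\tilde{\mathcal{F}}_a(x) \subseteq d\mathcal{F}^{\mathrm{si}}_a(x)$, this gives $d\tilde{\mathcal{F}}_a(x) = d\mathcal{F}^{\mathrm{si}}_a(x)$.

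Finally, to see that the hypothesis $\dim d\tilde{\mathcal{F}}_a(x) = \operatorname{tr.deg.}\tilde{\mathcal{F}}_a$ is met on a Zariski open dense subset of $\mathfrak{g}^*$, I would pick a transcendence basis $g_1,\dots,g_r \in \tilde{\mathcal{F}}_a$, where $r = \operatorname{tr.deg.}\tilde{\mathcal{F}}_a$. The set $U = \{x : dg_1(x) \wedge \cdots \wedge dg_r(x) \neq 0\}$ is a nonempty Zariski open subset (nonempty because algebraic independence of the $g_i$ guarantees the Jacobian is not identically zero), and at every $x \in U$ we have $\dim d\tilde{\mathcal{F}}_a(x) \geq r$; the reverse inequality is the general bound above, so equality holds on $U$.

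No real obstacle is expected: the entire argument is a formal consequence of Theorem~\ref{T:MainTrDeg} together with the standard semicontinuity of the rank of differentials, and the only substantive step is the transcendence-degree equality, which has already been supplied.
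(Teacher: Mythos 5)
Your proposal is correct and follows essentially the same route as the paper, which derives the corollary directly from Theorem~\ref{T:MainTrDeg} together with the general bound $\dim dA(x) \leq \operatorname{tr.deg.} A$ and the obvious inclusion $d\tilde{\mathcal{F}}_a(x) \subseteq d\mathcal{F}^{\mathrm{si}}_a(x)$. Your additional justification of the open-dense claim via a transcendence basis and the Jacobian criterion is the standard argument the paper leaves implicit.
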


Roughly speaking, the distribution $d\mathcal{F}^{\mathrm{si}}_a $ is obtained from the distribution $d \tilde{\mathcal{F}}_a$ by ``increasing some singular subspaces''. The completeness criterion for $\mathcal{F}^{\mathrm{si}}_a $ and $\tilde{\mathcal{F}}_a$ is, obviously, the same.

\begin{corollary} \label{Cor:CompFsi} For a regular $a \in \mathfrak{g}^*$ the algebra of shift of semi-invariants $\mathcal{F}^{\mathrm{si}}_a$ is complete if and only if $\operatorname{Sing}_{\mathfrak{b}}$  is dense in $\operatorname{Sing}_0$. \end{corollary}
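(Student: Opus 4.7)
The plan is that Corollary~\ref{Cor:CompFsi} is essentially immediate from the combination of Theorem~\ref{T:MainTrDeg} (proved in this paper) and Theorem~\ref{T:IzosComp} (Izosimov's completeness criterion). So the proof is a one-liner chain of equivalences, and the whole point is to observe that completeness of a commutative subalgebra is a statement purely about its transcendence degree, so algebras with the same transcendence degree are complete together.

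More concretely, I would start by recalling that a commutative subalgebra $C \subseteq S(\mathfrak{g})$ is complete precisely when equality holds in \eqref{Eq:TrDeg}, i.e.\ when $\operatorname{tr.deg.} C = \tfrac{1}{2}(\dim \mathfrak{g} + \operatorname{ind} \mathfrak{g})$. Apply this to $C = \mathcal{F}^{\mathrm{si}}_a$ and to $C = \tilde{\mathcal{F}}_a$. By Theorem~\ref{T:MainTrDeg} we have $\operatorname{tr.deg.} \mathcal{F}^{\mathrm{si}}_a = \operatorname{tr.deg.} \tilde{\mathcal{F}}_a$, so the two equalities are equivalent; hence $\mathcal{F}^{\mathrm{si}}_a$ is complete if and only if $\tilde{\mathcal{F}}_a$ is. Then invoke Theorem~\ref{T:IzosComp}, which characterizes completeness of $\tilde{\mathcal{F}}_a$ by the condition that $\operatorname{Sing}_{\mathfrak{b}}$ is dense in $\operatorname{Sing}_0$ (with the convention of the remark following Theorem~\ref{T:IzosComp} covering the $\operatorname{codim} \operatorname{Sing} \geq 2$ case). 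Chaining these equivalences gives the statement.

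There is no real obstacle here beyond the fact that both ingredients are already available: Theorem~\ref{T:MainTrDeg} is the nontrivial input (all the work of the paper goes into it), and Theorem~\ref{T:IzosComp} is quoted from \cite{Izosimov14}. One should just be slightly careful to use that both $\mathcal{F}^{\mathrm{si}}_a$ and $\tilde{\mathcal{F}}_a$ are commutative (w.r.t.\ $\{\cdot,\cdot\}$) subalgebras of $S(\mathfrak{g})$, so that the inequality \eqref{Eq:TrDeg} applies to both and ``complete'' makes sense in the same way for both. This is already stated earlier in the excerpt (for $\mathcal{F}^{\mathrm{si}}_a$ in the Corollary preceding Theorem~\ref{T:MainTrDeg}, and for $\tilde{\mathcal{F}}_a$ in Izosimov's bi-involutivity theorem), so nothing more needs to be verified.
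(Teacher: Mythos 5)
Your proposal is correct and matches the paper's intended argument: the corollary is derived by combining Theorem~\ref{T:MainTrDeg} with Theorem~\ref{T:IzosComp}, using that completeness of a commutative subalgebra is determined solely by its transcendence degree via equality in \eqref{Eq:TrDeg}. The paper leaves this chain of equivalences implicit (``the completeness criterion \ldots is, obviously, the same''), and you have simply spelled it out.
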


\par\medskip
 
\textbf{Acknowledgements.} The author would like to thank A.\,V.~Bolsinov and A.\,M.~Izosimov for useful comments.

\section{Roots of semi-invariants} \label{S:RootSemiInv}

In order to proof  Theorem~\ref{T:MainTrDeg} we need the following Lemma~\ref{L:EigenSemiInv} about the roots of semi-invariants. Below we consider Poisson brackets for (local) analytic functions. We extend the Lie--Poisson bracket $\left\{\cdot, \cdot\right\}$  and the  frozen argument  bracket $\left\{\cdot, \cdot\right\}_a$ to (local) analytic functions by \[  \left\{f,g \right\}  = \langle x, [ d f(x), dg(x)]\rangle, \qquad  \left\{f, g \right\}_a  = \langle a, [ df(x), dg(x)]\rangle.\]  Here we regard $df(x)$ and $dg(x)$ as elements of the Lie algebra $\mathfrak{g} \simeq T^*_x \mathfrak{g}^* $. The following statement was proved in \cite{Kozlov23JKRealization}, we repeat the proof for completeness sake.

\begin{lemma} \label{L:EigenSemiInv} Let $g(x)$ be a semi-invariant with weight $\chi_g$ on $\mathfrak{g}$ and $a \in \mathfrak{g}^*$ be an arbitrary element. Assume that $\lambda(x)$ is a solution of \begin{equation} \label{Eq:GEqLemma} g(x - \lambda(x) a) = 0,\end{equation} which is locally analytic on  $U \subset \mathfrak{g}^*$. Then  for any locally analytic function $h(x)$ on $U$ we have the equality 
\begin{equation} \label{Eq:EigenSemiInv} \left\{\lambda, h \right\}  = \lambda \left\{\lambda, h \right\}_a.\end{equation} 
\end{lemma}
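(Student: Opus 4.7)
The plan is to compute $d\lambda(x)$ by implicit differentiation of \eqref{Eq:GEqLemma}, substitute into the definitions of $\{\lambda,h\}$ and $\{\lambda,h\}_a$, and then see that the identity \eqref{Eq:EigenSemiInv} boils down to a single cancellation that falls out of the semi-invariant property of $g$ evaluated at the root $y = x - \lambda(x)a$.

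More concretely, I would first differentiate $g(x - \lambda(x)a) = 0$ in $x$ to obtain
\[
dg\bigl(x - \lambda(x) a\bigr) - \bigl\langle a,\, dg(x-\lambda(x)a)\bigr\rangle\, d\lambda(x) = 0,
\]
so that (on the open set where the denominator is nonzero)
\[
d\lambda(x) = \frac{dg(x-\lambda(x)a)}{\langle a,\, dg(x-\lambda(x)a)\rangle}.
\]
Writing $y := x - \lambda(x)a$ for brevity, the extended brackets then read
\[
\{\lambda,h\} = \frac{\langle x,\,[dg(y),\, dh(x)]\rangle}{\langle a,\, dg(y)\rangle},\qquad \{\lambda,h\}_a = \frac{\langle a,\,[dg(y),\, dh(x)]\rangle}{\langle a,\, dg(y)\rangle}.
\]
Consequently, \eqref{Eq:EigenSemiInv} is equivalent to the single identity
\[
\langle x - \lambda(x)a,\,[dg(y),\, dh(x)]\rangle = 0,\quad\text{i.e.,}\quad \langle y,\,[dg(y),\,dh(x)]\rangle = 0.
\]

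The second step is to recognize that this last identity is exactly the semi-invariant property of $g$ at the root $y$. Indeed, fix $x$ and consider the linear function $f_0(y) := \langle y, dh(x)\rangle$ on $\mathfrak{g}^*$, whose differential is the constant covector $df_0 \equiv dh(x) \in \mathfrak{g}$. Applying \eqref{Eq:SemiDef} to $f_0$ and $g$ at the point $y$ gives
\[
\langle y,\,[dh(x),\, dg(y)]\rangle = \{f_0, g\}_y = \chi_g(dh(x))\, g(y).
\]
Since $g(y) = g(x - \lambda(x)a) = 0$ by the definition of $\lambda(x)$, the right-hand side vanishes, and antisymmetry of the bracket $[\cdot,\cdot]$ yields the required $\langle y,[dg(y), dh(x)]\rangle = 0$, finishing the proof.

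The only genuine subtlety is ensuring the implicit differentiation is legitimate, i.e.\ that $\langle a, dg(y)\rangle \neq 0$ on $U$; this is automatic since $\lambda(x)$ is assumed to be a locally analytic solution of \eqref{Eq:GEqLemma}, which by the implicit function theorem forces $\partial_\lambda g(x-\lambda a) = -\langle a, dg(y)\rangle$ to be nonzero wherever $\lambda$ is defined analytically. Everything else is a short algebraic manipulation, so I do not expect any real obstacle beyond the conceptual step of introducing the auxiliary linear function $f_0$ to invoke \eqref{Eq:SemiDef}.
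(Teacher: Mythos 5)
Your proof is correct and follows essentially the same route as the paper: implicit differentiation of $g(x-\lambda(x)a)=0$ to obtain $d\lambda = dg(y)/\langle a, dg(y)\rangle$ with $y = x-\lambda(x)a$, followed by the semi-invariant identity evaluated at $y$, where $g(y)=0$ kills the right-hand side (the paper writes \eqref{Eq:SemiDef} in coordinates and substitutes $x\mapsto x-\lambda a$ instead of introducing your auxiliary linear function $f_0$, but this is only a presentational difference). The one imprecision is your closing remark: the implicit function theorem does not run backwards (an analytic root coming from a repeated factor of $g$ can have $\langle a, dg(y)\rangle \equiv 0$ along the root locus), so, as in the paper, one should instead argue that the denominator is nonzero for generic $x\in U$ and extend \eqref{Eq:EigenSemiInv} to all of $U$ by analyticity.
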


\begin{remark} \label{Rem:Cond2Mat} If we consider two matrices \begin{equation} \label{Eq:Mat} \mathcal{A}_x = \left( \sum_i c^i_{jk} x_i \right), \qquad  \mathcal{A}_a = \left( \sum_i c^i_{jk} a_i \right),\end{equation} then \eqref{Eq:EigenSemiInv}  can be rewritten as \begin{equation} \label{Eq:DLCondWell} d\lambda (x) \in \operatorname{Ker} \left(\mathcal{A}_x - \lambda(x) \mathcal{A}_a\right).\end{equation}  The condition \eqref{Eq:DLCondWell} for the characterisitc numbers (=eigenvalues) of compatible Poisson brackets is well-known, for more details see  \cite{Kozlov23JKRealization}. 
\end{remark} 

\begin{proof}[Proof of Lemma~\ref{L:EigenSemiInv}] Let us use the explicit formulas for the Lie-Poisson and frozen argument brackets: \[  \left\{\lambda, h \right\}  = \langle x, [ d\lambda, dh]\rangle, \qquad  \left\{\lambda, h \right\}_a  = \langle a, [ d\lambda, dh]\rangle.\]   Then \begin{equation} \label{Eq:GPower} \left\{\lambda, h \right\}  - \lambda \left\{\lambda, h \right\}_a  = \langle x  - \lambda a, [ d\lambda, dh]\rangle. \end{equation} Differentianting \eqref{Eq:GEqLemma} by $x$ we get \[ d \lambda(x) =\frac{dg(x- \lambda a)}{\langle dg(x- \lambda a), a\rangle},\]  if $dg(x - \lambda a) \not = 0$. Since $dg(x - \lambda a) \not = 0$ for generic $x \in U$, \eqref{Eq:GPower} takes the form \begin{equation} \label{Eq:ProofDifL1}  \left\{\lambda, h \right\}  - \lambda \left\{\lambda, h\right\}_a = \frac{1}{\langle dg(x- \lambda a), a\rangle} \langle x  - \lambda a, [ dg(x-\lambda a), dh(x)]\rangle.\end{equation}  Note that the condition \eqref{Eq:SemiDef} in the definition of semi-invariants can be written as \begin{equation} \label{Eq:SemiInvCond} \langle x , [ df(x), dg(x)]\rangle = \chi_g(df(x)) \cdot  g(x) \end{equation} In local coordinates \eqref{Eq:SemiInvCond}  can be written as \begin{equation} \label{Eq:SumPartSemiEq1} \sum_{k, j}  c_{ij}^k x_k \frac{\partial g}{\partial x^j}(x) =  \left(\chi_g \right)_i \cdot g(x). \end{equation}  If in \eqref{Eq:SumPartSemiEq1} we replace $x$ with $x-\lambda a$, then for any function $h(x)$ that is analytic on $U$ we get \begin{equation} \label{Eq:SemiInvChange1}  \langle x - \lambda a, [ h(x), dg(x-\lambda a) ]\rangle = \chi_g(dh(x)) \cdot g(x - \lambda a).\end{equation} Substituting \eqref{Eq:SemiInvChange1} in \eqref{Eq:ProofDifL1} we get \[  \left\{\lambda, h \right\} - \lambda \left\{\lambda, h \right\}_a = -\frac{1}{\langle dg(x- \lambda a), a\rangle} \chi_g(dh(x)) \cdot g(x - \lambda a) = 0.\] Here the last equation holds by \eqref{Eq:GEqLemma}. Lemma~\ref{L:EigenSemiInv} is proved. \end{proof} 

\section{Proof of Theorem~\ref{T:MainTrDeg}}

Fix a regular $a \in \mathfrak{g}^*$. Let $g(x)$ be a semi-invariant on $\mathfrak{g}$.   Expand the polynomial $g(a + \lambda x)$ in powers of $\lambda$: \[ g(a + \lambda x) = \sum_{i=1}^m g_i (x) \lambda^i, \qquad m = \deg g(x).\] Obviously, the algebra $\mathcal{F}^{\mathrm{si}}_a$ contains the polynomials $g(a + \lambda x)$ for all $\lambda \in \mathbb{C}$ if and only if it contains all the elements $g_i(x), i=1,\dots, m$. Thus, it suffices to prove that each polynomial $g_i(x)$ is algebraic over the fraction field $\operatorname{Frac} \left(\tilde{\mathcal{F}}_a\right)$. If some $g_i(x)$ is not algebraic over $\operatorname{Frac} \left(\tilde{\mathcal{F}}_a\right)$, then \[ d g_i(x) \not \in d \tilde{\mathcal{F}}_a(x)\] on a Zariski open non-empty subset $U\subset \mathfrak{g}^*$. Therefore, it suffices to prove the following.

\begin{lemma} \label{L:GidFa} There exist a (non-empty) open subset $W_g \subset \mathfrak{g}^*$ such that \begin{equation} \label{Eq:DgFaCondLem} d g_i(x) \in d \tilde{\mathcal{F}}_a(x), \qquad i=1, \dots, m, \quad \forall x \in W_g.\end{equation} \end{lemma}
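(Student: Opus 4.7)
The plan is to express each coefficient $g_i(x)$ via the roots of $g(x+ta)$ viewed as a polynomial in $t$, use Lemma~\ref{L:EigenSemiInv} to control their differentials, and then place these differentials inside $d\tilde{\mathcal{F}}_a(x)$ through a dichotomy on whether (an irreducible factor of) $g$ divides $p_{\mathfrak{g}}$. First I would reduce to the case of irreducible $g$: a standard UFD/logarithmic-derivative argument shows that every irreducible factor of a polynomial semi-invariant is itself a semi-invariant, and $\mathcal{F}^{\mathrm{si}}_a$ is generated by shifts of such irreducible semi-invariants. After shrinking to a suitable Zariski-open $W_g \subseteq \mathfrak{g}^*$ (on which $g(a)\neq 0$ and the roots are simple), homogeneity of $g$ of degree $m = \deg g$ gives
\[ g(a+\lambda x) \;=\; \lambda^m g(x+\lambda^{-1}a) \;=\; g(a)\prod_{k=1}^{m}\bigl(1-\lambda\,t_k(x)\bigr), \]
where $t_1(x),\dots,t_m(x)$ are the analytic roots of $g(x+ta)$ in $t$. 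Reading off coefficients, $g_i(x) = (-1)^i g(a)\,e_i(t_1(x),\dots,t_m(x))$, so $dg_i(x) \in \mathrm{span}\{dt_k(x)\}$. Applying Lemma~\ref{L:EigenSemiInv} with $\lambda(x)=-t_k(x)$ yields $dt_k(x) \in \ker(\mathcal{A}_x + t_k(x)\,\mathcal{A}_a)$.

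It then remains to place these kernel vectors inside $d\tilde{\mathcal{F}}_a(x)$, which I would split into two cases. \emph{Case~I:} $V(g) \not\subseteq \operatorname{Sing}_0$. Then $V(g) \cap \operatorname{Sing}$ has codimension $\geq 2$ in $\mathfrak{g}^*$, so for generic $x$ the affine line $\{x+ta : t \in \mathbb{C}\}$ misses $V(g) \cap \operatorname{Sing}$; equivalently, $x + t_k(x)\,a \notin \operatorname{Sing}$ for every root, so each $t_k(x)$ is a non-characteristic value of the pencil $\mathcal{A}_x + \lambda\mathcal{A}_a$. At such values the kernel $\ker(\mathcal{A}_x + t_k(x)\mathcal{A}_a)$ coincides with the common Kronecker kernel $\bigcap_{\lambda}\ker(\mathcal{A}_x + \lambda\mathcal{A}_a)$, which by Bolsinov's description of the algebra of shifts lies inside $d\mathcal{F}_a(x) \subseteq d\tilde{\mathcal{F}}_a(x)$. \emph{Case~II:} $V(g) \subseteq \operatorname{Sing}_0 = V(p_{\mathfrak{g}})$. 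Because $g$ is irreducible this forces $g \mid p_{\mathfrak{g}}$, hence $p_{\mathfrak{g}}(x+ta) = g(x+ta)\cdot h(x+ta)$ as polynomials in $t$, so every root $t_k(x)$ coincides locally analytically with some root $\tau_{j(k)}(x)$ of $p_{\mathfrak{g}}(x+ta)$. Applying the same Vieta expansion to $p_{\mathfrak{g}}$ itself (with $p_{\mathfrak{g}}(a)\neq 0$ since $a$ is regular) gives $d\tau_j(x) \in \mathrm{span}\{dp_i(x)\} \subseteq d\tilde{\mathcal{F}}_a(x)$, and therefore $dt_k(x) = d\tau_{j(k)}(x) \in d\tilde{\mathcal{F}}_a(x)$.

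I expect the main technical hurdle to be the clean identification of the characteristic numbers of the pencil $\mathcal{A}_x + \lambda\mathcal{A}_a$ with the zeros of $p_{\mathfrak{g}}(x+\lambda a)$ for generic $x$, which underpins both halves of the dichotomy. This rests on the set-theoretic equality $\operatorname{Sing}_0 = V(p_{\mathfrak{g}})$ together with a transversality count ensuring that for generic $x$ the line $\{x+\lambda a\}$ avoids the higher-codimension strata of $\operatorname{Sing}$; once this and Bolsinov's characterization of $d\mathcal{F}_a(x)$ are in hand, the remaining work is the symbolic bookkeeping of the Vieta expansion in the first paragraph, and the marginal case $g(a)=0$ is handled by replacing $g(a)$ with the leading coefficient of $g(x+ta)$ in $t$ and running the same argument on the irreducible factors (which, having $a$ in their vanishing set, automatically fall into Case~I).
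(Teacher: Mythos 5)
Your proposal follows essentially the same route as the paper: pass from the coefficients $g_i$ to the roots of $g(x+ta)$ via the elementary symmetric polynomials (the paper's Proposition~\ref{P:DiffLG}), use Lemma~\ref{L:EigenSemiInv} to place each $dt_k(x)$ in $\operatorname{Ker}\left(\mathcal{A}_x + t_k(x)\mathcal{A}_a\right)$, and then split according to whether the corresponding point of $V(g)$ lies in $\operatorname{Sing}_0 = V(p_{\mathfrak{g}})$ --- your factor-by-factor dichotomy is equivalent to the paper's root-by-root one, and your Case~II matches the paper's use of the shifts $p_1,\dots,p_n$ of the fundamental semi-invariant. One inaccuracy worth fixing: in Case~I the kernel $\operatorname{Ker}\left(\mathcal{A}_x + t_k(x)\mathcal{A}_a\right)$ at a non-characteristic value does \emph{not} in general coincide with $\bigcap_{\lambda}\operatorname{Ker}\left(\mathcal{A}_x+\lambda\mathcal{A}_a\right)$ (these kernels vary with $\lambda$; it is their \emph{sum} over the regular values that Bolsinov--Zhang identify with $d\mathcal{F}_a(x)$, assuming the line $x+\lambda a$ avoids the codimension-two part of $\operatorname{Sing}$), but the inclusion you actually need, $\operatorname{Ker}\left(\mathcal{A}_x + t_k(x)\mathcal{A}_a\right)\subseteq d\mathcal{F}_a(x)$, follows directly from that description, so the argument goes through.
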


\begin{proof}[Proof of Lemma~\ref{L:GidFa} ] Consider the equation \begin{equation} \label{Eq:RootG} g(x - \lambda a) = 0.\end{equation}  First, we show that \eqref{Eq:DgFaCondLem} follows from a similar condition for the roots of \eqref{Eq:RootG}. We use the following trivial statement. 

\begin{proposition} \label{P:UgRoots}  Let $g(x) \in \mathbb{C}[x_1, \dots, x_n]$ be a homogeneous polynomial. Consider its factorization into irreducible factors: \[ g(x) = h_1^{k_1} \cdot \dots \cdot h_p^{k_p}. \] Then there exists a Zariski open non-empty set $U_g \subset \mathbb{C}^n$ such that the equation \eqref{Eq:RootG} has $d = \displaystyle \sum_{j=1}^p \deg h_j$ distinct roots $\lambda_1(x), \dots, \lambda_d(x)$ for all $x \in U_g$.  \end{proposition}

It is well-known that the roots $\lambda_i(x)$ of \eqref{Eq:RootG} are locally analytic functions on $U$, defined up to permutation (see e.g. \cite[p.362]{Hormanger}). Locally we have \[ g(x - \lambda a) = g(a) \prod_{i=1}^d (\lambda_i(x) - \lambda)^{s_i}.\] It is easy to see that the functions $g_1, \dots, g_m$ are, up to a constant factor, elementary symmetric polynomials of the functions $\lambda_1, \dots, \lambda_d$  taken with multiplicities. Thus, similar to \cite[Proposition 5.1]{Izosimov14} we get the following. 

\begin{proposition}\label{P:DiffLG}    Let $U_g$ be the subset from Proposition~\ref{P:UgRoots}, $\lambda_i(x),i=1,\dots, d$ be the roots of  \eqref{Eq:RootG}, and $g_j(x),j=1,\dots, m$ be given by \eqref{Eq:GPower}. Then \[ \operatorname{span} \left( d\lambda_1(x), \dots, d \lambda_d(x) \right) =  \operatorname{span} \left( dg_1(x), \dots, d g_m(x) \right), \qquad \forall x \in U_g. \] \end{proposition}

By Proposition~\ref{P:DiffLG}, in order to prove \eqref{Eq:DgFaCondLem}, it suffices to show that \begin{equation} \label{Eq:DLDFaProof} d \lambda_i(x) \in d \mathcal{F}_a(x), \qquad i=1, \dots, d, \end{equation}  on some open subset $W_g \subset U_g$. Next, let $U_{p_{\mathfrak{g}}}$ be the open subset for the fundamental semi-invariant $p_{\mathfrak{g}}$ from Proposition~\ref{P:UgRoots}. Consider an open subset $W_g \subset U_g \cap U_{p_{\mathfrak{g}}}$ such that the following $2$ conditions holds:

\begin{enumerate}

\item The line $x + \lambda a$ does not intersect the subset \[\operatorname{Sing}_1 = \operatorname{Sing} - \operatorname{Sing}_0\] (note that $\operatorname{codim} \operatorname{Sing}_1 \geq 2$). 

\item For each root $\lambda_i(x)$ of the equation \eqref{Eq:RootG} for the semi-invariant $g(x)$

\begin{itemize} 

\item either $p_{\mathfrak{g}}(x - \lambda_i(x) a) \equiv 0$ for all $x \in W_g$, 

\item or $p_{\mathfrak{g}}(x - \lambda_i(x) a) \not = 0$ for all $x \in W_g$.

\end{itemize}

\end{enumerate} 

We claim that \eqref{Eq:DLDFaProof} holds on $W_g$. There are $2$ cases:

\begin{itemize}

\item $p_{\mathfrak{g}}(x - \lambda_i(x) a) \equiv 0$ on $W_g$. Since $W_g \subset U_{p_{\mathfrak{g}}}$ we can apply Proposition~\ref{P:DiffLG}  for the fundamental semi-invariant $p_{\mathfrak{g}}(x)$. We get that  \[ d\lambda_i(x) \in  \operatorname{span} \left( dp_1(x), \dots, d p_n(x) \right) \subset d \tilde{\mathcal{F}}_a, \qquad \forall x \in W_g.\] Here the functions $p_j(x)$ are given by \eqref{Eq:ExpPg}.

\item $p_{\mathfrak{g}}(x - \lambda_i(x) a) \not = 0$ on $W_g$. By Lemma~\ref{L:EigenSemiInv} and Remark~\ref{Rem:Cond2Mat} \[ d\lambda_i (x)\in \operatorname{Ker} \left( \mathcal{A}_x - \lambda \mathcal{A}_a\right)\] By \cite[Remark 1]{BolsZhang} for any regular $a \in \mathfrak{g}^*$ the subspace \[ d \mathcal{F}_a(x) =  \sum_{x - \lambda a \not \in \operatorname{Sing} } \operatorname{Ker} \left( \mathcal{A}_x - \lambda \mathcal{A}_a\right),  \] for any $x \in \mathfrak{g}^*$ such that the line $x + \lambda a$ does not intersect $\operatorname{Sing}_1$. Thus, \[ d\lambda_i (x)  \in d \mathcal{F}_a(x)  \subseteq d \tilde{\mathcal{F}}_a(x), \qquad \forall x \in W_g.\]
\end{itemize}

We proved that \eqref{Eq:DLDFaProof} holds on $W_g$. Hence, by Proposition~\ref{P:DiffLG} the required condition \eqref{Eq:DgFaCondLem} also holds on $W_g$.  This proves Lemma~\ref{L:GidFa}, and thus Theorem~\ref{T:MainTrDeg}.  \end{proof}

\end{document}